\numberwithin{equation}{section}
\newtheorem{thm}[equation]{Theorem}
\newtheorem{lemma}[equation]{Lemma}
\newtheorem{prop}[equation]{Proposition}
\theoremstyle{remark}
\newtheorem*{remark}{Remark}
\newtheorem{question}[equation]{Question}
\newcommand{\abs}[1]{\left\lvert#1\right\rvert}
\renewcommand{\bar}[1]{#1\llap{$\overline{\phantom{\rm#1}}$}}
\newcommand{\RR}{\ensuremath{\mathbb{R}}}
\newcommand{\CC}{\ensuremath{\mathbb{C}}}
\newcommand{\Z}{\ensuremath{\mathbb{Z}}}
\newcommand{\QQ}{\mathbb{Q}}
\newcommand{\QB}{\bar{\mathbb{Q}}}
\newcommand{\Line}{\mathbb{P}^1}
\DeclareMathOperator{\gal}{Gal}
\DeclareMathOperator{\res}{Res}
\DeclareMathOperator{\br}{Br}
\title{Separated Belyi Maps}
\author{Zachary Scherr}
\address{
  Department of Mathematics,
  University of Pennsylvania,
  209 South 33rd Street,
  Philadelphia, PA 19104-6395 USA
}
\email{zscherr@math.upenn.edu}
\urladdr{http://www.math.upenn.edu/$\sim$zscherr/}
\author{Michael E. Zieve}
\address{
  Department of Mathematics,
  University of Michigan,
  530 Church Street,
  Ann Arbor, MI 48109-1043 USA
}
\address{
Mathematical Sciences Center, Tsinghua University, Beijing 100084, China}
\email{zieve@umich.edu}
\urladdr{http://www.math.lsa.umich.edu/$\sim$zieve/}
\date{\today}
\thanks{The authors thank the NSF for support under grants EMSW21-RTG:0943832 and DMS-1162181.}
\begin{document}


\begin{abstract}
We construct Belyi maps having specified behavior at finitely many points.  Specifically,
for any curve $C$ defined over $\QB$, and any disjoint finite subsets $S,T\subset C(\QB)$, we construct
a finite morphism $\varphi\colon C\to\Line$ such that $\varphi$ ramifies at each point in $S$, the
branch locus of $\varphi$ is $\{0,1,\infty\}$, and $\varphi(T)\cap\{0,1,\infty\}=\emptyset$.  This refines a result of Mochizuki's.
We also prove an analogous result over fields of positive characteristic, and in addition we analyze how many different Belyi maps $\varphi$
are required to imply the above conclusion for a single $C$ and $S$ and all sets $T\subset C(\QB)\setminus S$ of prescribed cardinality.
\end{abstract}

\maketitle


\section{Introduction}

Let $C$ be a (smooth, projective, geometrically irreducible) algebraic curve defined over $\CC$.  Belyi \cite{Belyi} gave an unexpected necessary and sufficient condition
for $C$ to be isomorphic to a curve defined over $\QB$: namely,  that there should exist a finite morphism $\varphi\colon C\to\Line$ which has
exactly three branch points.  We refer to such a map $\varphi$ as a \textit{Belyi map}, and write $\br(\varphi)$ for its branch locus.
Belyi maps have had important consequences to topics ranging from
Galois theory \cite{Grothendieck} to physics \cite{physics};
see for instance \cite{lift,Elkies,Goldring,diffeq,S,SL,SL2} for various other consequences of Belyi maps.
In some applications, one needs Belyi maps $\varphi$ which satisfy
additional properties.  One prominent example is Mochizuki's 
inter-universal Teichm\"{u}ller theory \cite{abc}, which relies on his earlier results on Belyi maps \cite{Mochizuki}.

We are interested in studying the amount of flexibility there is in the choice of a Belyi map $\varphi$ on a prescribed curve $C$.
One way to measure this is through the preimage $\varphi^{-1}(\br(\varphi))$. We will show in Proposition~\ref{fin} that, for any curve $C$ of genus
at least $2$, only finitely many subsets of $C(\CC)$ of any prescribed cardinality can occur as $\varphi^{-1}(\br(\varphi))$ for a Belyi map $\varphi$ on $C$.
These distinguished finite subsets of $C(\CC)$ are the focus of this paper.

We now state our first main result, which refines a result of Mochizuki's \cite[Thm.~2.5]{Mochizuki}.  Throughout this paper, we view all curves as coming equipped with a fixed
embedding into projective space, so that if a curve $C$ is defined over a field $K$ then we can speak of the coordinatewise action of the
absolute Galois group of $K$ on points of $C(\bar{K})$.

\begin{thm}\label{mochizuki}
Let $K$ be a number field, let $C$ be a curve over $K$, and let $S$ and $T$ be finite subsets of $C(\bar{K})$ such that $S$ is disjoint from the set of
$\gal(\bar{K}/K)$-conjugates of elements of $T$.  Then there exists a finite morphism
$\varphi\colon C\rightarrow\mathbb{P}^{1}$
defined over $K$ such that
\begin{itemize}
\item $\br(\varphi)=\{0,1,\infty\}$
\item $\varphi$ is ramified at every point in $S$
\item $\varphi(T)\cap\{0,1,\infty\}=\emptyset$.
\end{itemize}
\end{thm}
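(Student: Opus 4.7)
The plan is to write $\varphi$ as a composition $\beta\circ\psi$, where $\psi\colon C\to\Line$ is a $K$-morphism ramified at every point of $S$, and $\beta\colon\Line\to\Line$ comes from a variant of Belyi's procedure. This separates concerns: $\psi$ produces the ramification over $S$, while $\beta$ arranges the branch locus of the composition to equal $\{0,1,\infty\}$. As a preliminary reduction, the hypothesis that $S$ is disjoint from the $\gal(\bar{K}/K)$-conjugates of $T$ implies that the full Galois orbits of $S$ and $T$ remain disjoint, so I may assume throughout that $S$ and $T$ are both Galois-stable.

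To construct $\psi$, I would apply Riemann-Roch to a $K$-rational effective divisor $D=\sum_{P\in S}n_P\cdot P$, with $n_P\geq 2$ and constant on Galois orbits of $S$; after passing to a sufficiently large multiple of $D$, the space $L(D)$ contains $K$-rational sections whose pole divisor is exactly $D$. A generic such section, viewed as a morphism $\psi\colon C\to\Line$, has $\psi^{-1}(\infty)=S$ with multiplicities $n_P\geq 2$, giving the required ramification on $S$. Genericity within $L(D)$ also lets me ensure that $\psi(T)$ is disjoint from the branch locus $B$ of $\psi$, and indeed from any single finite subset of $\Line(\bar{K})$ specified in advance.

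The main task is then to find $\beta\colon\Line\to\Line$ over $\QQ$ (or $K$) with $\br(\beta)=\{0,1,\infty\}$, $\beta(B)\subseteq\{0,1,\infty\}$, and $\beta(\psi(T))\cap\{0,1,\infty\}=\emptyset$. The first two conditions alone are the output of Belyi's theorem applied to $B$: iteratively apply polynomials $p_1,\ldots,p_k$ of two kinds (minimal polynomials of non-rational branch points, then normalized Belyi polynomials $c_{a,b}\,x^a(1-x)^b$ to collapse rational branch points toward $\{0,1,\infty\}$). My plan is to run this procedure on $B$ while simultaneously tracking the sets $E_i:=(p_i\circ\cdots\circ p_1)(\psi(T))$, maintaining the invariant $E_i\cap B_i=\emptyset$ where $B_i$ denotes the branch locus of $p_i\circ\cdots\circ p_1\circ\psi$. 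The inductive relations $B_i=p_i(B_{i-1})\cup\br(p_i)$ and $E_i=p_i(E_{i-1})$ reduce maintenance of the invariant to choosing each $p_i$ so that it is injective on $E_{i-1}\cup B_{i-1}$ and has branch locus disjoint from $p_i(E_{i-1})$, both being open conditions on the integer parameters appearing in Belyi's standard polynomials. When the procedure terminates with $B_k=\{0,1,\infty\}$, the invariant guarantees $E_k\cap\{0,1,\infty\}=\emptyset$, and $\varphi:=p_k\circ\cdots\circ p_1\circ\psi$ has all the required properties.

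The main obstacle I anticipate is carrying the invariant through the minimal-polynomial step of Belyi's procedure, where one has little parametric freedom: the polynomial is essentially determined by the non-rational branch point being rationalized, so if a point of $\psi(T)$ happens to coincide with a critical value, the invariant breaks. I would repair this by first precomposing with a $K$-rational Möbius transformation that separates the current image of $\psi(T)$ from the critical values, but ensuring that this precomposition remains compatible with the rest of Belyi's reduction (in particular, that the Galois degrees of branch points still decrease afterwards) is the part requiring the most careful argument.
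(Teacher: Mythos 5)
Your decomposition inverts the one that works, and the steps you yourself flag as ``requiring the most careful argument'' are precisely where the content of the theorem lies. There are three concrete gaps. First, the claim that a generic $\psi\in\mathcal{L}(D)$ with polar divisor $D$ satisfies $\psi(T)\cap\br(\psi)=\emptyset$ is not an avoidance of finitely many proper subspaces: the branch locus moves with $\psi$, so this needs a genuine argument (an incidence-variety dimension count or an explicit construction), and none is given. Second, and decisively, the modified Belyi descent cannot in general be carried out over $\QQ$ for your data. The set $E_0=\psi(T)$ is an arbitrary finite subset of $\Line(\QB)$, and if some $\gal(\QB/\QQ)$-conjugate of a point of $E_0$ lies in $B_0=\br(\psi)$ --- say $\sqrt2\in B_0$ and $-\sqrt2\in E_0$ --- then \emph{every} $\beta\in\QQ(x)$ with $\beta(B_0)\subseteq\{0,1,\infty\}$ automatically sends $-\sqrt2$ into $\{0,1,\infty\}$, so no choice of the $p_i$ over $\QQ$ can maintain your invariant; working over $K$ instead only pushes the problem into the final collapsing step, where the polynomials $c_{a,b}x^a(1-x)^b$ require $\QQ$-rational data. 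Third, the parametric freedom you invoke is largely illusory: the minimal-polynomial steps have no parameters at all, and the normalized Belyi polynomials satisfy $c_{ka,kb}\,x^{ka}(1-x)^{kb}=\bigl(c_{a,b}\,x^{a}(1-x)^{b}\bigr)^{k}$, so if one of them identifies a point of $E_{i-1}$ with a point of $B_{i-1}$ then so does every member of the family. Your proposed M\"obius repair is exactly the statement that needs to be proved.

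The paper avoids all of this by decomposing the other way around. Its first map (Proposition~\ref{riemannrochlemma}) does \emph{not} ramify at $S$; it collapses $T$ to the single $\QQ$-rational point $\infty$ while keeping $A=\varphi_1(S)\cup\br(\varphi_1)$ away from $\infty$. Because the tracked point is rational, it is automatically disjoint from the full set of $\QQ$-conjugates of $A$, and it is kept out of the branch locus by quantitative rather than generic arguments: Proposition~\ref{reductiontoQ} bounds all of $f(A)\cup(\br(f)\setminus\{\infty\})$ archimedeanly in terms of the number of conjugates of $A$ while making the image of the tracked point arbitrarily large, and Proposition~\ref{finishQ} uses a $p$-adic valuation to keep it away from $1$. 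The ramification at $S$ is then produced by the \emph{last} map, which ramifies at every point of $B\supseteq\varphi_2(\varphi_1(S))$. To salvage your outline you would need either to collapse $T$ to a rational point first, or to replace the counting of integer parameters with explicit archimedean and non-archimedean estimates of this kind.
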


In this result, the set $\{0,1,\infty\}$ could be replaced by any prescribed three-element subset of $K$, since there are linear fractional
transformations over $K$ which map any such subset to $\{0,1,\infty\}$.
In \cite[Thm.~2.5]{Mochizuki}, Mochizuki proved a similar result in which the condition on the ramification of $\varphi$ on $S$ is replaced by the weaker condition that $\varphi(S)=\{0,1,\infty\}$.   Our proof is quite different from Mochizuki's, and uses ideas from \cite{us}, although both our proof
and Mochizuki's follow the general outline of all known proofs of Belyi's theorem.

Positive characteristic analogues of Belyi maps seem to have been considered for the first time by Katz \cite[Lemma 16]{Katz} in the context of
the Langlands correspondence for function fields.  In positive characteristic, every curve admits a finite morphism to $\Line$ having just one branch point.
We prove the following analogue of Theorem~\ref{mochizuki} in this setting.

\begin{thm} \label{charp}
Let $K$ be a perfect field of characteristic $p>0$, let $C$ be a curve over $K$, and let $S,T$ be finite subsets of $C(\bar{K})$ such that
$S$ is disjoint from the set of $\gal(\bar{K}/K)$-conjugates of elements of $T$.
Then there exists a finite morphism
$\varphi\colon C\rightarrow\mathbb{P}^{1}$
defined over $K$ such that
\begin{itemize}
\item $\br(\varphi)=\{\infty\}$
\item $\varphi$ is ramified at every point in $S$
\item $\infty\notin\varphi(T)$.
\end{itemize}
\end{thm}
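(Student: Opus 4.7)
My plan is to mimic the classical proof strategy for Belyi's theorem while exploiting the extra flexibility available in characteristic~$p$: \'etale covers of $\mathbb{A}^1$ are plentiful (for instance Artin--Schreier covers and, more generally, additive polynomials), so one can collapse several branch points into a single point at $\infty$ rather than having to retain three distinct branch points. I will construct $\varphi$ as a composition $\lambda\circ\psi$, where $\psi\colon C\to\Line$ over $K$ handles the ramification at $S$ and $\lambda\colon\Line\to\Line$ over $K$ pushes the remaining branch points together onto $\{\infty\}$ while preserving the image of $T$ in $\mathbb{A}^1$.

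First, using Riemann--Roch applied to a suitable effective $K$-rational divisor supported on the Galois orbit of $S$ (with multiplicities chosen large enough to force ramification at each of those points), I would construct a separable finite morphism $\psi\colon C\to\Line$ defined over $K$ that is ramified at every Galois conjugate of every point of $S$, satisfies $\infty\notin\psi(T)$, and satisfies $\psi(T)\cap\br(\psi)=\emptyset$. The hypothesis that $S$ is disjoint from the Galois orbit of $T$ guarantees that these conditions are mutually consistent, and the last condition can be arranged by choosing $\psi$ generically within an appropriate linear system.

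Set $B=\br(\psi)\subset\Line(\bar{K})$, a finite Galois-stable set, which we may assume contains $\infty$ after post-composing with a linear fractional transformation if necessary. The second step is to find a rational map $\lambda\in K(x)$ such that $\br(\lambda)\subseteq\{\infty\}$, $\lambda(B)\subseteq\{\infty\}$, and $\infty\notin\lambda(\psi(T))$. Given such a $\lambda$, the composition $\varphi=\lambda\circ\psi$ satisfies $\br(\varphi)=\lambda(B)\cup\br(\lambda)=\{\infty\}$, remains ramified at every point of $S$ (ramification is preserved under composition), and satisfies $\infty\notin\varphi(T)$, which completes the proof.

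Constructing $\lambda$ is the main obstacle, and is where characteristic $p$ intervenes decisively. The basic building block is a separable $\F_p$-linear polynomial $A(x)=\prod_{v\in V}(x-v)\in K[x]$, where $V\subset\bar{K}$ is a Galois-stable $\F_p$-subspace containing $B\cap\mathbb{A}^1$; such $A$ satisfies $\br(A)=\{\infty\}$ and $A(B\cap\mathbb{A}^1)=\{0\}$, so $A\circ\psi$ has branch locus contained in $\{0,\infty\}$. Post-composing with $\mu(y)=y^p+1/y$, which satisfies $\br(\mu)=\{\infty\}$ and $\mu(0)=\mu(\infty)=\infty$, then collapses the branch locus down to $\{\infty\}$. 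The delicate point is ensuring that $\lambda=\mu\circ A$ does not send any point of $\psi(T)$ to $\infty$, which requires $\psi(T)\cap V=\emptyset$. Since $V$ must contain the full $\F_p$-span of $B\cap\mathbb{A}^1$ and not merely $B\cap\mathbb{A}^1$ itself, in general one must replace the purely additive $A$ by an affine $\F_p$-linear polynomial $L(x)+c$---whose zero set is a coset of $\ker L$ and need not contain the entire $\F_p$-span---or alternatively precompose with a linear fractional transformation on $\Line$ that moves $\psi(T)$ off of $V$. This simultaneous juggling of collapsing $B$ to $\{\infty\}$ while keeping $\psi(T)$ away from $\infty$ is the principal technical challenge.
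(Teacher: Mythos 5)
Your overall architecture (a Riemann--Roch map $\psi$ followed by an additive-polynomial ``collapse to $\infty$'') is the right one, but there is a genuine gap at the step you yourself flag as the principal technical challenge, and it is not a removable technicality: it is precisely the point the paper's argument is organized to circumvent. After your first step, $\psi(T)$ is still an arbitrary finite subset of $\mathbb{A}^1(\bar{K})$, and you need $\lambda\in K(x)$ with $\br(\lambda)\subseteq\{\infty\}$ and $\lambda(\br(\psi))=\{\infty\}$ whose fiber over $\infty$ misses \emph{all} of $\psi(T)$. For $\lambda=\mu\circ A$ that fiber contains the entire $\F_p$-span $V$ of $\br(\psi)\cap\mathbb{A}^1$ (in particular $0\in V$), and neither proposed repair is carried out or works in general: every $\F_p$-affine coset containing $\br(\psi)\setminus\{\infty\}$ contains the minimal affine span $b_0+\operatorname{span}_{\F_p}(\br(\psi)-b_0)$, which is forced by $\br(\psi)$ and may meet $\psi(T)$; and a pre-composed fractional linear transformation moves $\br(\psi)$ along with $\psi(T)$, must be defined over $K$, and over a finite field $K$ a suitable one need not exist. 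In effect your second step is as hard as the theorem itself for $C=\Line$, with $\br(\psi)$ and $\psi(T)$ playing the roles of $S$ and $T$.

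The paper escapes this by reversing the roles of the two sets in the first step: Proposition~\ref{riemannrochlemma} produces $\varphi_1$ with \emph{simple} poles exactly at $T$ (and at auxiliary points disjoint from $S$), so that $\varphi_1(T)=\{\infty\}$ while $\varphi_1(S)\cup\br(\varphi_1)$ avoids $\infty$. After inverting, only the \emph{single} point $0$ must be kept out of the bad fiber, and Proposition~\ref{charpprop} achieves this by replacing the additive polynomial $T(x)=\prod_{\alpha\in V}(x-\alpha)$ with $T(x)+T(x)^p/x^p$, whose zero set still contains the conjugates of the branch points but no longer contains $0$. Note also that the paper gets the required ramification at $S$ for free by post-composing with $y^p+y$, which is ramified at $\infty$ and hence at every point of the fiber over $\infty$, which contains the image of $S$; your $\mu(y)=y^p+1/y$ is unramified at $0$, so $\lambda$ is unramified at $\br(\psi)$ and all ramification at $S$ must come from $\psi$. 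That forces you to justify both the higher-multiplicity Riemann--Roch construction and the genericity claim $\psi(T)\cap\br(\psi)=\emptyset$, neither of which is proved --- and even the latter is weaker than what your second step actually needs, namely that $\psi(T)$ avoid the full $\F_p$-affine span of $\br(\psi)$, not merely $\br(\psi)$ itself.
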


Now we fix a set $S$ and let $T$ vary over all subsets of $C(\QB)\setminus S$ of prescribed cardinality $n$.  In Theorem~\ref{mochizuki}
we produced a Belyi map corresponding to any such $T$; our next result shows that in fact only $n+1$ Belyi maps are needed to account for all $T$.

\begin{thm} \label{manymaps}
Let $n\ge 1$ be an integer, and let $C$ be a curve defined over\/ $\QB$. If $S\subset C(\QB)$ is a finite set of points, then there exist finite morphisms
\[
\varphi_{1},\ldots,\varphi_{n+1}\colon C\rightarrow\mathbb{P}^{1}
\]
such that
\begin{itemize}
\item $\varphi_{i}(S)\subseteq\{0,1,\infty\}=\br(\varphi_i)$ for $1\le i\le n+1$
\item For any subset $T\subset C(\QB)$ of cardinality $n$ for which $S\cap T=\emptyset$, there exists an $i\in\{0,1,\ldots,n+1\}$ such that $\varphi_{i}(T)\cap\{0,1,\infty\}=\emptyset.$
\end{itemize}
\end{thm}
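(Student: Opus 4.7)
The plan is to deduce Theorem~\ref{manymaps} from Theorem~\ref{mochizuki} by an inductive construction followed by a pigeonhole argument. The key observation is that if the finite sets $A_{i}:=\varphi_{i}^{-1}(\{0,1,\infty\})\setminus S$ are pairwise disjoint, then any $T\subset C(\QB)\setminus S$ of cardinality $n$ can meet at most $n$ of the $n+1$ sets $A_{i}$. Hence at least one $A_{i}$ is disjoint from $T$, which is precisely the statement $\varphi_{i}(T)\cap\{0,1,\infty\}=\emptyset$.

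To arrange pairwise disjointness, I would build the $\varphi_{i}$ one at a time. First choose a number field $K$ over which $C$ is defined and such that $S$ is $\gal(\bar{K}/K)$-stable, enlarging $K$ throughout the construction as needed. Suppose $\varphi_{1},\ldots,\varphi_{i-1}$ and the finite sets $A_{1},\ldots,A_{i-1}$ have been produced, and set $T_{i}:=A_{1}\cup\cdots\cup A_{i-1}$; this is a finite subset of $C(\bar{K})$ disjoint from $S$ by construction of the $A_{j}$. After enlarging $K$ if necessary so that $T_{i}$ is also $\gal(\bar{K}/K)$-stable, apply Theorem~\ref{mochizuki} to the pair $(S,T_{i})$ to produce a morphism $\varphi_{i}\colon C\to\mathbb{P}^{1}$ defined over $K$ with $\br(\varphi_{i})=\{0,1,\infty\}$, ramified at every point of $S$, and satisfying $\varphi_{i}(T_{i})\cap\{0,1,\infty\}=\emptyset$. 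Setting $A_{i}:=\varphi_{i}^{-1}(\{0,1,\infty\})\setminus S$, this last condition yields $A_{i}\cap T_{i}=\emptyset$, so inductively the sets $A_{1},\ldots,A_{i}$ are pairwise disjoint, as desired; note that $A_{i}$ is finite because $\varphi_{i}$ is a finite morphism, so the induction proceeds cleanly.

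The remaining condition $\varphi_{i}(S)\subseteq\{0,1,\infty\}$ is automatic: since $\br(\varphi_{i})=\{0,1,\infty\}$ and each point of $S$ is a ramification point of $\varphi_{i}$, its image must lie in the branch locus. Because all of the substantive work is done by Theorem~\ref{mochizuki}, I do not anticipate a genuine obstacle here; the only care needed is in tracking the Galois-stability hypothesis of Theorem~\ref{mochizuki} across the $n+1$ inductive steps, which is handled by freely enlarging $K$ as the $T_{i}$ grow.
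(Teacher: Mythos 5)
Your proposal is correct and follows essentially the same route as the paper: an inductive application of Theorem~\ref{mochizuki} with $T_i$ taken to be the union of the previously produced preimage sets minus $S$, followed by the pigeonhole observation that $n+1$ pairwise disjoint sets cannot all meet an $n$-element set. The paper phrases the disjoint sets as successive differences $T_i\setminus T_{i-1}$ rather than your $A_i$, but this is only a notational difference.
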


This refines a result of Mochizuki's \cite[Cor.~3.1]{Mochizuki}, which showed that there were finitely many such Belyi maps via a compactness argument which did not provide
control of the number of maps.   In Theorem~\ref{mymochpgen} we give an analogous result in positive characteristic.
We note that these results have a topological interpretation.  For any curve $C$ defined over $\QB$, we say that a \textit{Belyi open} subset of $C(\QB)$ is
any set of the form $C(\QB)\setminus\varphi^{-1}(\br(\varphi))$ where $\varphi\colon C\to\Line$ is a Belyi map.  Theorem~\ref{mochizuki} implies that,
for any disjoint finite $S,T\subset C(\QB)$, there is a Belyi open set which contains $T$ but is disjoint from $S$.  Theorem~\ref{manymaps} implies that,
for any finite $S\subset C(\QB)$, there are $n+1$ Belyi open sets $U_1,\dots,U_{n+1}$ such that any $n$-element subset $T$ of $C(\QB)\setminus S$
satisfies $T\subset U_i\subset C(\QB)\setminus S$ for some $i$.

Belyi proved his theorem by constructing $\varphi$ as the composition $\varphi_3\circ\varphi_2\circ\varphi_1$, where $\varphi_1\colon C\to\Line$ is any
finite morphism defined over $\bar{\QQ}$, $\varphi_2\colon\Line\to\Line$ satisfies $B:=\br(\varphi_2)\cup\varphi_2(\br(\varphi_1))\subset\QQ$,
and $\varphi_3\colon\Line\to\Line$ satisfies $\br(\varphi_3)\cup\varphi_3(B)=\{0,1,\infty\}$.  His argument shows that there exist Belyi maps $\varphi$
for which $\varphi^{-1}(\br(\varphi))$ contains any prescribed subset of $C(\bar{\QQ})$.  Both Mochizuki's proof and our proof have a similar structure
to Belyi's proof, in that $\varphi$ is constructed as the composition of three maps.  The difference is that we require some points to stay away
from the branch locus, as in the following diagram:
\[
\xymatrix{C\ar[d]^{\varphi_{1}} & T\ar[d] & S\ar[d]\\
\mathbb{P}^{1}\ar[d]^{\varphi_{2}}&\alpha=\infty\ar[d]\ar @{} [r] |-{\not\in}&A=\varphi_{1}(S)\cup \br(\varphi_1)\ar[d]\\
\mathbb{P}^{1}\ar[d]^{\varphi_{3}}&\beta=\varphi_{2}(\infty)\ar[d]\ar @{} [r] |-{\not\in}&B=\varphi_{2}(A)\cup \br(\varphi_{2})\ar[d]\\
\mathbb{P}^{1}&\varphi_{3}(\beta)\ar @{} [r] |-{\not\in}&\{0,1,\infty\}
}
\]
We will construct the map $\varphi_1$ in the next section, treating the case of positive characteristic at the same time as the case of
characteristic zero.  The maps $\varphi_2$ and $\varphi_3$ are constructed for fields of characteristic zero in the section after that,
and then we construct them over fields of positive characteristic.  Then in the last section we treat collections of Belyi maps.


\section{Reduction to $\mathbb{P}^{1}$}

Our goal is to show that every curve $C$ admits a Belyi map $C\rightarrow\mathbb{P}^1$ satisfying additional constraints.  In this section we show that it suffices to do this when $C=\mathbb{P}^1$.

\begin{prop}\label{riemannrochlemma}
Let $C$ be a curve defined over a perfect field $K$.  If $S$ and $T$ are disjoint finite $\gal(\bar{K}/K)$-stable subsets of $C(\bar{K})$ then there exists a finite morphism
\[
\varphi\colon C\rightarrow\mathbb{P}^{1}
\]
defined over $K$ such that $\varphi(T)\subseteq\{\infty\}$ and $\infty\not\in\varphi(S)\cup\br(\varphi)$.
\end{prop}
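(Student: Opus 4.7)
The plan is to realize $\varphi$ as the morphism $C\to\mathbb{P}^{1}$ attached to a rational function $f\in K(C)$ whose pole divisor $(f)_{\infty}$ is a reduced effective divisor whose support contains $T$ and is disjoint from $S$. Such an $f$ automatically yields all three conclusions: $\varphi(T)\subseteq\{\infty\}$ because $f$ has a pole at every point of $T$; $\infty\notin\varphi(S)$ because $(f)_{\infty}$ avoids $S$; and $\infty\notin\br(\varphi)$ because the ramification index of $\varphi$ at each point of $\varphi^{-1}(\infty)$ equals the pole order of $f$ there, which is $1$ by reducedness. The whole task is therefore to produce such an $f$ defined over $K$.

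To build $f$ by Riemann--Roch, write $D_{T}=\sum_{t\in T}t$ and pick a $K$-rational reduced effective divisor $E$, supported away from $S\cup T$, of sufficiently large degree (any $\deg E\geq 2g$ will do); such an $E$ exists because any smooth projective curve has infinitely many closed points, even over a finite field. Set $D:=D_{T}+E$, decompose $T=T_{1}\sqcup\cdots\sqcup T_{k}$ into $\gal(\bar{K}/K)$-orbits with associated $K$-rational divisors $D_{i}:=\sum_{t\in T_{i}}t$, and let $V:=L(D)$ and $V_{i}:=L(D-D_{i})$. The choice of $E$ guarantees that every divisor $D-\sum_{i\in I}D_{i}$ has degree at least $2g-1$, so Riemann--Roch applies with equality throughout. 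Since the $D_{i}$ have pairwise disjoint supports, $\bigcap_{i\in I}V_{i}=L\bigl(D-\sum_{i\in I}D_{i}\bigr)$, whence $\dim\bigcap_{i\in I}V_{i}=\dim V-\sum_{i\in I}|T_{i}|$. The task reduces to finding $f\in V\setminus\bigcup_{i}V_{i}$: Galois invariance forces $f$ to have a pole at every point of $T_{i}$ as soon as it has one at a single point of $T_{i}$, while $(f)_{\infty}\leq D$ guarantees that the pole divisor is reduced and supported away from $S$.

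The main obstacle is showing $V\neq\bigcup_{i}V_{i}$ when $K$ is finite, since the textbook argument that a finite union of proper $K$-subspaces is proper applies only over infinite fields. The general-position identity above rescues us: writing $K=\mathbb{F}_{q}$ and applying inclusion--exclusion to the sizes $|\bigcap_{i\in I}V_{i}|=q^{\dim V-\sum_{i\in I}|T_{i}|}$ gives
\[
\left|\,V\setminus\bigcup_{i=1}^{k}V_{i}\,\right|=q^{\dim V}\prod_{i=1}^{k}\bigl(1-q^{-|T_{i}|}\bigr)>0.
\]
Over an infinite $K$ the existence of $f\in V\setminus\bigcup_{i}V_{i}$ follows from the classical fact that a finite union of proper $K$-subspaces is proper, so in either case the desired $f$, and hence $\varphi$, exists.
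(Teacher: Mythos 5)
Your proof is correct and follows essentially the same route as the paper's: the same Riemann--Roch setup with an auxiliary reduced $K$-rational divisor of large degree supported away from $S\cup T$, the same reduction to finding $f\in\mathcal{L}(D)$ outside the union of the orbit subspaces $\mathcal{L}(D-D_i)$, and the same two-case argument (a finite union of proper subspaces is proper over an infinite field; an inclusion--exclusion count over a finite field). The only point you leave implicit is the trivial case $T=\emptyset$, where your union over $i$ is empty and one should either adjoin a dummy Galois-stable point to $T$ or note directly that $\mathcal{L}(E)$ contains a nonconstant function.
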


Before proving this we recall some terminology.  A divisor on $C$ (over $\bar{K}$) is said to be \textit{defined over $K$} if it is fixed by
$\gal(\bar{K}/K)$.  For any divisor $D$ on $C$ which is defined over $K$, the associated Riemann--Roch space is the $K$-vector space
\[
\mathcal{L}(D):=\{f\in K(C)\colon (f)\ge -D\}\cup\{0\},
\]
and the dimension of this vector space is denoted $\ell(D)$.

\begin{proof}
It suffices to prove the result when $T$ is nonempty, since the result for empty $T$ follows from the result for nonempty $T$.
Thus, we will assume that $T$ is nonempty.
Let $O_1,O_2,\ldots,O_n$ be the $\gal(\bar{K}/K)$-orbits of points in $T$, and for each $i$ let $D_i$ be the divisor
\[
D_i=\sum_{P\in O_i}P.
\]
Note that $D_i$ is defined over $K$.  Let $g$ be the genus of $C$, and let $R$ be a finite $\gal(\bar{K}/K)$-stable subset of $C(\bar{K})$ such that $\abs{R}\ge 2g-1$ and $R\cap(S\cup T)=\emptyset$.
Let $D$ be the divisor
\[
D=\sum_{P\in R} P,
\]
so that $D$ is defined over $K$ and $\deg(D)=\abs{R}\ge2g-1$.
By the following lemma, there is an element $f\in \mathcal{L}(D+\sum_{i=1}^n D_i)$ which is not in $\mathcal{L}(D+\sum_{i\ne j} D_i)$ for any $j$.
Then $f\in K(C)$ has simple poles at all the points of $T$, at most simple poles at the points in $R$, and no other poles.  Because $R$ is disjoint from $S$, we see that $f$ extends to a morphism $\varphi$ satisfying the requirements of the proposition.
\end{proof}

\begin{lemma}\label{rrlem1}
Let $C$ be a curve of genus $g$ defined over a perfect field $K$.  Let $T$ be a finite, non-empty $\gal(\bar{K}/K)$-stable set of points of $C(\bar{K})$.  Let $O_1,\ldots, O_n$ be the $\gal(\bar{K}/K)$ orbits in $T$, and let $D_i$ be the divisor $D_i=\sum_{P\in O_i} P$.  If $D$ is a divisor defined over $K$ of degree at least $2g-1$, then
\[
\mathcal{L}(D+\sum_{i=1}^n D_i)\supsetneqq\bigcup_{j=1}^n\mathcal{L}(D+\sum_{i\ne j}D_i).
\]
\end{lemma}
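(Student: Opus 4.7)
The plan is to identify the cokernel of $\mathcal{L}(D)\hookrightarrow\mathcal{L}(D+\sum_{i=1}^n D_i)$ explicitly as $\bigoplus_{j=1}^n K_j$, where $K_j$ denotes the residue field of the closed point of $C$ (viewed as a $K$-scheme) corresponding to the Galois orbit $O_j$, and then to observe that the subspaces $\mathcal{L}(D+\sum_{i\ne j}D_i)$ correspond to the $n$ coordinate hyperplanes of this direct sum, none of which contains the diagonal element $(1,1,\dots,1)$.

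First I would establish the short exact sequence of $K$-vector spaces
\[
0\to\mathcal{L}(D)\to\mathcal{L}\Bigl(D+\sum_{i=1}^n D_i\Bigr)\xrightarrow{\tau}\bigoplus_{j=1}^n K_j\to 0
\]
by taking global sections of the sheaf exact sequence $0\to\mathcal{O}(D)\to\mathcal{O}(D+\sum_i D_i)\to\mathcal{Q}\to 0$, where $\mathcal{Q}$ is the skyscraper cokernel supported on $T$. Since $K$ is perfect, each $\gal(\bar{K}/K)$-orbit $O_j$ is the set of $\bar{K}$-points of a single reduced closed point of $C$ with residue field $K_j$ satisfying $[K_j\col K]=\abs{O_j}$, and the stalk of $\mathcal{Q}$ at that closed point is a one-dimensional $K_j$-vector space, so $H^0(C,\mathcal{Q})\cong\bigoplus_{j=1}^n K_j$ as $K$-vector spaces. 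Surjectivity of $\tau$ is exactly where the hypothesis $\deg D\ge 2g-1$ enters, via $H^1(C,\mathcal{O}(D))=0$.

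Next I would verify that $\mathcal{L}(D+\sum_{i\ne j}D_i)=\tau^{-1}\bigl(\bigoplus_{k\ne j}K_k\bigr)$: a function $f$ in the big space lies in $\mathcal{L}(D+\sum_{i\ne j}D_i)$ precisely when its pole order at each point $P\in O_j$ is at most $(D+\sum_i D_i)(P)-1$, which is exactly the vanishing of the $j$-th coordinate of $\tau(f)$. Finally, any preimage $f$ of $(1,1,\dots,1)\in\bigoplus_j K_j$ under $\tau$ has nonzero image in every $K_j$, so $f$ lives in $\mathcal{L}(D+\sum_i D_i)$ but avoids every $\mathcal{L}(D+\sum_{i\ne j}D_i)$, proving the strict containment.

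The delicate step is the identification of the cokernel as $\bigoplus_j K_j$ over $K$: one needs Galois descent for the skyscraper sheaf, and this is where perfectness of $K$ is essential, so that each orbit corresponds to a reduced closed point with residue field of the expected degree. After that, the argument is a coordinatewise observation, and in particular sidesteps the usual finite-field pathology of ``union of proper subspaces equal to the whole space'' by working in a direct sum of field extensions, where the diagonal element trivially has every coordinate nonzero.
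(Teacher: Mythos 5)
Your proof is correct, and it takes a genuinely different route from the paper's. The paper argues by dimension counting: Riemann--Roch shows each $\mathcal{L}(D+\sum_{i\ne j}D_i)$ is a proper subspace of $\mathcal{L}(D+\sum_{i}D_i)$, and then two cases are handled separately --- for infinite $K$ by the fact that a vector space over an infinite field is not a finite union of proper subspaces, and for finite $K$ of order $q$ by an inclusion--exclusion count showing the union has cardinality $q^{r+m}\bigl(1-\prod_i(1-q^{-m_i})\bigr)<q^{r+m}$. Your argument replaces this case split with a single structural observation: the quotient $\mathcal{L}(D+\sum_i D_i)/\mathcal{L}(D)$ is $\bigoplus_j Q_j$ with each $Q_j$ a one-dimensional $K_j$-vector space, the subspaces in question are exactly the preimages of the coordinate hyperplanes, and an element with every component nonzero avoids all of them. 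This is uniform in $K$, and it even explains the paper's finite-field computation, since $1-\prod_i(1-q^{-m_i})$ is precisely the proportion of elements of $\prod_j \F_{q^{m_j}}$ having some coordinate equal to zero. The cost is heavier input: you need the vanishing $H^1(C,\mathcal{O}(D))=0$ for $\deg D\ge 2g-1$ to get surjectivity onto the cokernel, and Galois descent identifying each orbit $O_j$ with a reduced closed point of residue degree $\abs{O_j}$ (this is where perfectness enters for you; the paper needs perfectness only to descend the Riemann--Roch spaces themselves). One cosmetic point: the stalk $Q_j$ is only non-canonically isomorphic to $K_j$, so ``the element $(1,1,\dots,1)$'' should be ``any element of $\bigoplus_j Q_j$ that is nonzero in every component,'' which exists because each $Q_j\ne 0$; this does not affect the validity of the argument.
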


\begin{proof}

For each $j\in\{1,2,\ldots,n\}$, we have
\[
\deg(D+\sum_{i=1}^n D_i) > \deg(D+\sum_{i\ne j} D_i)\ge 2g-1,
\]
so the Riemann-Roch theorem implies that
\[
\ell(D+\sum_{i=1}^n D_i) = \deg(D)+\abs{T}+1-g >\ell(D+\sum_{i\ne j} D_i).
\]
Since a vector space over an infinite field cannot be written as the union of finitely many proper subspaces, it follows that if $K$ is infinite then 
\[
\mathcal{L}(D+\sum_{i=1}^n D_i)\supsetneqq\bigcup_{j=1}^n\mathcal{L}(D+\sum_{i\ne j}D_i).
\]

Henceforth assume that $K$ is finite, and put
\begin{eqnarray*}
m_i&=&\deg(D_i)=\abs{O_i}\\
m&=&\abs{T}\\
q&=&\abs{K}\\
r&=&\deg(D)+1-g.
\end{eqnarray*}
The Riemann--Roch theorem implies that $\ell(D+\sum_{i=1}^n D_i)=r+m$, and thus
\[
\Big\lvert\mathcal{L}(D+\sum_{i=1}^n D_i)\Big\rvert=q^{r+m}.
\]
For distinct $j_1,j_2,\ldots,j_k\in \{1,2,\ldots,n\}$, we have
\[
\bigcap_{t=1}^k\mathcal{L}(D+\sum_{i\ne j_t} D_i)=\mathcal{L}(D+\sum_{i\ne j_1,\ldots,j_k} D_i).
\]

\noindent Riemann--Roch implies that
\begin{eqnarray*}
\ell(D+\sum_{i\ne j_1,\ldots,j_k} D_i)&=&r+\sum_{i\ne j_1,\ldots,j_t} m_i\\
&=&r+m-m_{j_1}-\ldots-m_{j_k},
\end{eqnarray*}
so that
\[
\abs{\mathcal{L}(D+\sum_{i\ne j_1,\ldots,j_k} D_i)} = q^{r+m-m_{j_1}-\ldots-m_{j_k}}.
\]

\noindent It follows by inclusion-exclusion that
\[
\Big\lvert\bigcup_{j=1}^n\mathcal{L}(D+\sum_{i\ne j} D_i)\Big\rvert
=\sum_{k=1}^n (-1)^{k+1}\sum_{1\le j_1<\ldots<j_k\le n}q^{r+m-\sum_{t=1}^k m_{j_t}}.
\]
This cardinality equals
\[
q^{r+m}\left(1-\prod_{i=1}^n\left(1-\frac{1}{q^{m_{j_i}}}\right)\right),
\]
which is strictly smaller than $q^{r+m}$.  Thus our union of subspaces is a proper subset of $\mathcal{L}(D+\sum_{i=1}^n D_i)$.
\end{proof}

\section{Characteristic 0}

In this section we prove that for any finite subset $A\subset\mathbb{P}^1(\QB)$, and any element $\alpha\in\mathbb{P}^1(\QB)\setminus A$, there is a Belyi map $\varphi\colon\mathbb{P}^1\rightarrow\mathbb{P}^1$ with $\varphi(\alpha)\notin\br(\varphi)=\{0,1,\infty\}$ such that all points in $A$
ramify under $\varphi$.  We do this in two steps.  The first step produces a map $\varphi_2$ which allows us to reduce to the case that $A$ and $\alpha$ are in $\Line(\QQ)$.  The second step produces a map $\varphi_3$ for which $\varphi=\varphi_3\circ\varphi_2$ has the required properties.

To proceed, fix an embedding $\QB\hookrightarrow\CC$, and let $\abs{.}\colon\QB\rightarrow\RR$ be the induced absolute value.  The following lemma 
enables us to control the absolute values of $\alpha$ and of the elements of $A$.

\begin{lemma}\label{sizechange}
Let $A$ be a finite subset of\/ $\mathbb{P}^{1}(\QB)$, and suppose $\alpha\in\mathbb{P}^{1}(\QQ)\setminus A$.  Then for any real number $c>1$, there exists a fractional linear transformation $\psi\in\QQ(x)$ such that $\psi(\alpha)\in\mathbb{Q}$ with $\displaystyle\max_{\beta\in A}\{\abs{\psi(\beta)}\}<1$ and $\abs{\psi(\alpha)}>c$.
\end{lemma}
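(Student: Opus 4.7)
The plan is to construct $\psi$ in the simple form $\psi(x)=\lambda/(x-r)$ with rational $\lambda,r$, after first reducing to the case that $\alpha$ is a finite rational number. The reduction is handled by a preliminary rational M\"obius substitution: if $\alpha=\infty$, then $\infty\notin A$ (since $\alpha\notin A$), so I pick any $a\in\QQ\setminus A$ and replace $(\alpha,A)$ by $(0,\eta(A))$ via $\eta(x)=1/(x-a)\in\QQ(x)$. Here $\eta(A)$ is a finite subset of $\QB\setminus\{0\}$, and any solution $\psi'$ for the new data composes with $\eta$ to yield $\psi=\psi'\circ\eta$ solving the original. So I may assume $\alpha\in\QQ$.

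The geometric idea of the main construction is that a pole of $\psi$ placed very close to $\alpha$ inflates $\abs{\psi(\alpha)}$ arbitrarily, while leaving $\abs{\psi(\beta)}$ small for every $\beta$ lying at definite positive distance from $\alpha$. Quantitatively, set $\delta=\min\{\abs{\beta-\alpha}\colon\beta\in A,\ \beta\neq\infty\}$, with the convention $\delta=+\infty$ when $A\subseteq\{\infty\}$; since $\alpha\notin A$ this is positive. By density of $\QQ$ in $\RR$ I choose $r\in\QQ\setminus(A\cup\{\alpha\})$ with $\abs{r-\alpha}<\delta/(2c)$, so that the triangle inequality forces $\abs{\beta-r}\ge\abs{\beta-\alpha}-\abs{\alpha-r}>\delta/2$ for every finite $\beta\in A$. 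The interval $c\abs{\alpha-r}<\abs{\lambda}<\delta/2$ is then nonempty (since $\abs{\alpha-r}<\delta/(2c)$), and any rational $\lambda$ in it yields $\psi(x)=\lambda/(x-r)$ with $\psi(\alpha)=\lambda/(\alpha-r)\in\QQ$ satisfying $\abs{\psi(\alpha)}>c$, $\abs{\psi(\beta)}=\abs{\lambda}/\abs{\beta-r}<1$ for finite $\beta\in A$, and $\psi(\infty)=0$ should $\infty$ lie in $A$.

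There is essentially no substantial obstacle here: the only mild care needed is in handling the projective case $\alpha=\infty$ and in juggling both inequalities simultaneously, as $\abs{\lambda}$ must exceed $c\abs{\alpha-r}$ yet remain below the distances $\abs{\beta-r}$. Both conditions become compatible precisely when $r$ is chosen sufficiently close to $\alpha$, and this compatibility is the crux of the argument.
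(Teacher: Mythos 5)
Your proof is correct and takes essentially the same approach as the paper's: after reducing to the case of a finite rational $\alpha$, both arguments place the pole of $\psi(x)=\lambda/(x-r)$ at a rational $r$ chosen close enough to $\alpha$ that the window $c\,\abs{\alpha-r}<\abs{\lambda}<\min_{\beta}\abs{\beta-r}$ is nonempty, then pick a rational $\lambda$ in it. The only cosmetic differences are that the paper normalizes to $\alpha=0$ and moves $\infty$ out of $A$ at the outset, whereas you allow $\infty\in A$ and observe directly that $\psi(\infty)=0$.
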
 
\begin{proof}
By applying an appropriate fractional linear transformation with rational coefficients, we may assume without loss of generality that $\alpha=0$ and $\infty\notin A$. Because $\alpha=0$ is not in $A$, we can choose an $r\in\QQ$ with $\displaystyle 0 < r < \min_{\beta\in A}\frac{\abs{\beta}}{c+1}$. Rewriting this inequality, we have that for any $\beta\in A$
\[
rc < \abs{\beta}-r.
\]
Thus the triangle inequality yields
\[
rc < \abs{\beta} - r \le \abs{\beta-r}.
\]
Choose  a rational number $s$ such that $\displaystyle rc < s <\min_{\beta\in A} \abs{\beta-r}$. For the fractional linear transformation
\[
\psi(x)=\frac{s}{x-r}
\]
we see that
\[
\abs{\psi(0)}=\frac{s}{r}>c
\]
and that for $\beta$ in $A$ we have
\[
\abs{\psi(\beta)} = \frac{s}{\abs{\beta-r}}<1.
\]
Since $s$ and $r$ are rational, $\psi$ satisfies the conditions of the lemma.
\end{proof}

Our next result enables us to reduce from the case of a finite subset of $\mathbb{P}^1(\QB)$ to a finite subset of $\mathbb{P}^1(\QQ)$.

\begin{prop}\label{reductiontoQ}
Let $A$ be a finite subset of\/ $\mathbb{P}^{1}(\QB)$, and let $\alpha\in\mathbb{P}^{1}(\QQ)\setminus A$.  There exists a rational function $f\in\QQ(x)$ satisfying
\[
f(A), \br(f), f(\alpha)\subseteq\mathbb{P}^{1}(\QQ)\quad\text{and}\quad
f(\alpha)\not\in f(A)\cup \br(f).
\]
\end{prop}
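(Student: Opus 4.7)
The plan is to induct on the lexicographic pair $(D,N)$, where $D:=\max_{\beta\in A}[\QQ(\beta):\QQ]$ and $N$ is the number of $\gal(\QB/\QQ)$-orbits of elements of $A$ of degree $D$.  I may freely enlarge $A$ to be Galois-stable, since $\alpha\in\QQ$ has singleton Galois orbit (so the enlargement still avoids $\alpha$), and the enlargement can only strengthen the desired conclusion.  The base case $D=1$ is handled by $f(x)=x$: the branch locus is empty, and $f(\alpha)=\alpha\notin A=f(A)$.

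For the inductive step with $D>1$, first apply Lemma~\ref{sizechange} with any fixed $c>3$ to obtain a fractional linear $\psi\in\QQ(x)$ with $|\psi(\beta)|<1$ for all $\beta\in A$ and $|\psi(\alpha)|>c$.  Set $A^{\ast}:=\psi(A)$ and $\alpha^{\ast}:=\psi(\alpha)$; since $\psi$ has degree one over $\QQ$, the set $A^{\ast}$ has the same $(D,N)$ as $A$ and is still Galois-stable.  Pick $\beta_{0}\in A^{\ast}$ of degree $D$ and let $m\in\QQ[x]$ be its monic minimal polynomial.  Define
\[
A':=m(A^{\ast})\cup\br(m),\qquad \alpha':=m(\alpha^{\ast}).
\]
Then $A'$ is a Galois-stable finite subset of $\mathbb{P}^{1}(\QB)$ and $\alpha'\in\QQ$.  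A standard check shows the lex pair of $A'$ is strictly smaller than $(D,N)$: degrees cannot increase under $m\in\QQ[x]$ since $m(\beta)\in\QQ(\beta)$; all conjugates of $\beta_{0}$ collapse to $0$ by Galois-stability of $A^{\ast}$; and every element of $\br(m)$ has degree less than $D$ because $\deg m'=D-1$.

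The delicate point is ensuring $\alpha'\notin A'$, so that the inductive hypothesis actually applies.  Since $A^{\ast}$ is Galois-stable, every root of $m$ lies in $A^{\ast}$, hence inside the open unit disk.  The Gauss--Lucas theorem then places every critical point of $m$ in the convex hull of the roots, still inside the open unit disk.  The triangle inequality therefore gives $|\gamma|<2^{D}$ for every $\gamma\in A'$, whereas $|m(\alpha^{\ast})|>(c-1)^{D}>2^{D}$; hence $\alpha'\notin A'$.  Applying the inductive hypothesis to $(A',\alpha')$ produces $f'\in\QQ(x)$ with $f'(A'),\br(f'),f'(\alpha')\subset\mathbb{P}^{1}(\QQ)$ and $f'(\alpha')\notin f'(A')\cup\br(f')$.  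Setting $f:=f'\circ m\circ\psi\in\QQ(x)$ and using $\br(\psi)=\emptyset$ together with the identity $\br(h\circ g)=h(\br(g))\cup\br(h)$ yields $f(A)\cup\br(f)\subseteq f'(A')\cup\br(f')$ and $f(\alpha)=f'(\alpha')$, so $f$ inherits the required conclusions.  The main obstacle is this separation estimate $\alpha'\notin A'$; everything else is routine Belyi-style bookkeeping.
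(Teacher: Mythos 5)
Your proof is correct, and it reaches the conclusion by a genuinely different route than the paper. The paper applies Lemma~\ref{sizechange} exactly once, with a constant $c$ left unspecified until the end, and builds the whole map in one pass as a tower $f_{n-1}\circ\cdots\circ f_0$, where $f_0=\prod_{\beta\in\bar{A}}(x-\beta)$ kills all of the Galois closure $\bar{A}$ at once and each $f_i=\res_y\bigl(f_{i-1}'(y),\,f_{i-1}(y)-x\bigr)$ collapses the finite branch points of $f_{i-1}$ to $0$; the separation $f(\alpha)\notin f(A)\cup\br(f)$ is then obtained by showing that every coefficient of every $f_i$ is bounded by a function of $n=\#\bar{A}$ alone, independent of $c$, so that choosing $c$ sufficiently large at the very end does the job. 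You instead run a genuine induction on the lexicographic pair (maximal degree, number of maximal-degree orbits), re-apply Lemma~\ref{sizechange} at every stage, use only the minimal polynomial of a single maximal-degree element per stage, and obtain the separation locally from Gauss--Lucas together with the elementary estimate $2^D<(c-1)^D$ for a fixed $c>3$. Your version buys a much tamer quantitative step (a fixed small $c$ per stage, rather than one enormous $c$ whose adequacy must be tracked through a chain of resultants), at the price of more elaborate induction bookkeeping and repeated renormalization; the paper's version avoids renormalizing and constructs $f$ explicitly in one shot. Two cosmetic points: your assertion that $|\gamma|<2^{D}$ for \emph{every} $\gamma\in A'$ is literally false, since $\infty\in\br(m)\subseteq A'$ once $D\ge 2$, though this is harmless because $\alpha'=m(\alpha^{*})$ is finite; and you should say a word about the case $A=\emptyset$, where $D$ is undefined --- your freedom to enlarge $A$ implicitly covers it, just as the paper covers it by replacing $\emptyset$ with a rational singleton.
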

\begin{proof}
If $A=\emptyset$, then we lose nothing by replacing $A$ with $\{\beta\}$ for some $\beta\in\QQ$ different from $\alpha$.  Thus for the remainder of the proof we assume that $A$ is nonempty. Let $\bar{A}$ be the set of all Galois conjugates of elements of $A$.  Since $\alpha\in\mathbb{P}^{1}(\QQ)\setminus A$ we know that $\alpha\not\in\bar{A}$ as well.  Applying a fractional linear transformation from Lemma~\ref{sizechange},
we may assume without loss of generality that $\abs{\beta}<1$ for all $\beta\in\bar{A}$, and that $c<\abs{\alpha}<\infty$ for some positive constant $c$ to be chosen later. Note that the fractional linear transformation has rational coefficients so the image of $\bar{A}$ is still Galois stable.
Writing $n:=\#\bar{A}$, we will define polynomials $f_0,f_1,\dots,f_{n-1}\in\QQ[x]$ such that
\begin{enumerate}
\item $\deg(f_i)=n-i$ for $0\le i\le n-1$
\item $f_i(\br(f_{i-1}))=\{0\}$ for $1\le i\le n-1$
\item $f_0(\bar{A})=\{0\}$
\item the leading coefficient of $f_i$ is an integer
\item all coefficients of $f_i$ have absolute value bounded by a function of $n$ (independent of the choice of $c$) for $0\le i\le n-1$.
\end{enumerate}
Supposing for the moment that such $f_i$ have been constructed, we now show that if $c$ is sufficiently large compared to $n$ then
$f:=f_{n-1}\circ f_{n-2}\circ\dots\circ f_1\circ f_0$ satisfies the conclusion of the Proposition.  For $\beta\in \bar{A}$
we have $f_0(\beta)=0$ and thus $f(\beta)=f_{n-1}\circ \dots\circ f_1(0)$ is a rational number whose absolute value is bounded by a
function of $n$.  Next, any finite element $\delta$ of $\br(f)$ has the form $\delta=f_{n-1}\circ f_{n-2}\circ\dots\circ f_i(\gamma)$ where $0\le i\le n-2$
and $f_i'(\gamma)=0$. Then $f_{i+1}(f_i(\gamma))=0$, so $\delta=f_{n-1}\circ f_{n-2}\circ\dots\circ f_{i+2}(0)$ is a rational number whose absolute
value is bounded by a function of $n$.
Finally, since $\alpha\in\QQ$ and $c<\abs{\alpha}<\infty$,
we see that $f(\alpha)\in\QQ$ and that $\abs{f(\alpha)}$ is larger than any prescribed function of $n$ whenever $c$ is sufficiently large;
since $f(A)$ and $\br(f)\setminus\{\infty\}$ are bounded by a function of $n$, the statement of the result follows.

To finish the proof, we must construct polynomials $f_i\in\QQ[x]$ satisfying the stated properties.  Define
\[
f_{0}(x):=\prod_{\beta\in\overline{A}}(x-\beta).
\]
Since $\bar{A}$ is Galois stable, we see that $f_{0}$ is a monic polynomial in $\QQ[x]$ of degree $n$.  Moreover, $f_0(\bar{A})=\{0\}$ and
all coefficients of $f_0$ have absolute value bounded by a function of $n$.
Inductively, suppose for any $1\le i\le n-1$ that $f_{i-1}$ has been defined and satisfies the stated properties.  Then put
\[
f_{i}(x) := \res_{y}(f_{i-1}'(y),f_{i-1}(y)-x),
\]
where $\res_{y}()$ is the resultant with respect to the variable $y$.  Since the resultant is the determinant of the Sylvester matrix,
we see that $f_i$ has rational coefficients and that all its coefficients are bounded by a function of $n$.
Let $ax^d$ be the leading term of $f_{i-1}$, where we know that $d=n-i+1$.  Let $r_1,r_2,\dots,r_{d-1}$ be the roots of $f'_{i-1}$, counted with
multiplicity.  Then we have
\[
f_i(x)=(ad)^d\prod_{j=1}^{d-1}(f_{i-1}(r_j)-x).
\]
It follows that the leading term of $f_i$ is $(-x)^{d-1}(ad)^d$, and that the roots of $f_i$ are precisely the finite branch points of $f_{i-1}$.
This concludes the proof.
\end{proof}

We now construct the desired Belyi map in case the curve is $\mathbb{P}^1$ and all distinguished points are in $\mathbb{P}^1(\QQ)$.

\begin{prop}\label{finishQ}
Let $B$ be a finite subset of\/ $\mathbb{P}^{1}(\QQ)$, and suppose $\beta\in\mathbb{P}^{1}(\QQ)\setminus B$.  There exists a rational function $f\in\QQ(x)$ satisfying:
\begin{itemize}
\item $f$ is ramified at every point in $B$
\item $\br(f)=\{0,1,\infty\}$
\item $f(\beta)\notin\{0,1,\infty\}$.
\end{itemize}
\end{prop}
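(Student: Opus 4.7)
The plan is to build $f$ in three stages: a preliminary polynomial $g_0$ that ramifies at every point of $B$, a map $g_1$ supplied by Proposition~\ref{reductiontoQ} that rationalizes the branch locus, and an iterated Belyi polynomial $h$ that crushes the branch locus onto $\{0,1,\infty\}$ without sending the image of $\beta$ into $\{0,1,\infty\}$.

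After a preliminary Möbius $\mu\in\PGL_2(\QQ)$ I may assume $\infty\in B$ and $\beta=0$. Writing $B\setminus\{\infty\}=\{b_1,\ldots,b_m\}\subset\QQ\setminus\{0\}$, set
\[
g_0(x):=\prod_{i=1}^m (x-b_i)^{e_i}
\]
for even integers $e_i\ge 2$. This polynomial ramifies at every point of $B$ (at each $b_i$ via the squared factor, and at $\infty$ since $\deg g_0\ge 2$), and $g_0(0)=\prod_i b_i^{e_i}$ is a nonzero rational. By adjusting the $e_i$ or inserting extra simple factors we can arrange $g_0(0)\notin\br(g_0)$, so Proposition~\ref{reductiontoQ} applied with $A=\br(g_0)$ and $\alpha=g_0(0)$ yields $g_1\in\QQ[x]$ such that $g:=g_1\circ g_0$ satisfies $\br(g)\subset\QQ$, still ramifies at every point of $B$, and has $g(0)\in\QQ\setminus\br(g)$.

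It now suffices to find $h\in\QQ[x]$ with $h(\br(g))\cup\br(h)\subseteq\{0,1,\infty\}$ and $h(g(0))\notin\{0,1,\infty\}$; the final map is $h\circ g$ (pre-composed with $\mu$). After a further Möbius placing $\{0,1,\infty\}\subseteq\br(g)$ and $g(0)\notin\{0,1,\infty\}$, I would run the classical Belyi reduction: at stage $i$, pick $a_i$ in the current bad set $B_{i-1}\setminus\{0,1,\infty\}$, conjugate by an element of the $S_3\subset\PGL_2(\QQ)$ stabilizing $\{0,1,\infty\}$ to place $a_i\in(0,1)\cap\QQ$, write $a_i=m_i/(m_i+n_i)$ in lowest terms, and apply
\[
h_i(x):=\frac{(m_i+n_i)^{m_i+n_i}}{m_i^{m_i}\,n_i^{n_i}}\,x^{m_i}(1-x)^{n_i},
\]
which has branch locus $\{0,1,\infty\}$ and maps $\{0,1,\infty,a_i\}$ to $\{0,1,\infty\}$. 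Belyi's classical height argument guarantees termination at some step $k$ with $B_k\subseteq\{0,1,\infty\}$, giving $h:=h_k\circ\cdots\circ h_1$.

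The main obstacle is the third required condition $f(\beta)\notin\{0,1,\infty\}$, which amounts to maintaining $\beta_i:=h_i(\beta_{i-1})\notin\{0,1,\infty\}$ throughout the iteration. The rational preimages of $\{0,1,\infty\}$ under $h_i$ are $\{0,1,\infty,a_i\}$ together with a typically empty set of extra rational solutions of $h_i(x)=1$; by induction $\beta_{i-1}\notin B_{i-1}\cup\{0,1,\infty\}$ and $a_i\in B_{i-1}$, so $\beta_{i-1}\ne a_i$ automatically and generically $h_i(\beta_{i-1})\notin\{0,1,\infty\}$. Any exceptional coincidence—either $\beta_{i-1}$ being an extra rational root of $h_i(x)=1$ or $\beta_i$ landing in $h_i(B_{i-1})$—can be sidestepped by choosing a different $a_i\in B_{i-1}\setminus\{0,1,\infty\}$, by modifying the $S_3$-Möbius used, or by inserting an intermediate Belyi polynomial, exploiting the flexibility available in Belyi's reduction at each step. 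A parallel induction shows $\beta_i\notin B_i$, so both invariants can be preserved, giving $f(\beta)\in\QQ\setminus\{0,1,\infty\}$; and $\br(f)$ is all of $\{0,1,\infty\}$ (not a proper subset) because each $h_i$ in the composition contributes full branching over all three points.
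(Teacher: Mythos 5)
Your reduction to a polynomial $g$ with $\br(g)\subset\QQ$, ramified at all of $B$, with $g(0)\in\QQ\setminus\br(g)$ is fine in outline (modulo some hand-waving about arranging $g_0(0)\notin\br(g_0)$). The genuine gap is exactly where you acknowledge "the main obstacle": keeping $\beta_i\notin\{0,1,\infty\}$ through the iterated Belyi reduction. Since each $h_i$ is a polynomial vanishing only at $0$ and $1$, the only danger is $h_i(\beta_{i-1})=1$, and this danger is real, not "generically empty": for $a_i=1/3$ one has $h_{1,2}(x)=\tfrac{27}{4}x(1-x)^2$ and $h_{1,2}(4/3)=1$, so $\beta_{i-1}=4/3$ is killed. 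None of your three escape routes is established. Choosing a different $a_i$ is impossible when $B_{i-1}\setminus\{0,1,\infty\}$ is a singleton, which is the unavoidable last step. The two $S_3$-conjugations placing $a_i$ in $(0,1)$ produce $a$ and $1-a$, and since $h_{n,m}(1-y)=h_{m,n}(y)$ they give the \emph{same} value at $\beta_{i-1}$, so that degree of freedom is illusory (and post-composing with $S_3$ obviously cannot move a point out of $\{0,1,\infty\}$). Inserting an intermediate Belyi polynomial changes the entire bad set and can increase the heights of its elements, so the termination argument for Belyi's algorithm would have to be re-proved for your modified procedure; you do not do this. As written, the central claim of the proposition is asserted rather than proved.

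The paper sidesteps the iteration entirely with a one-shot construction worth knowing. After reducing to $B\cup\{\beta\}\subset\Z$, it adjoins one auxiliary point $\delta=\beta+p$ for a prime $p$ dividing $\delta-\beta$ but no $\gamma-\beta$ with $\gamma\in B$, and sets $f=\prod_i(x-b_i)^{2n_i}$ where the integers $n_i$ are the partial-fraction numerators of $c/\prod_i(x-b_i)$ with $c=\prod_{i\ne j}(b_i-b_j)$. Then $f'/f=2c/\prod_i(x-b_i)$ has no zeros, so $f$ has no critical points outside $B'\cup\{\infty\}$ and $\br(f)=\{0,1,\infty\}$ immediately; and the $p$-adic valuation of $f(\beta)=\prod_i(\beta-b_i)^{2n_i}$ is nonzero because exactly one factor is divisible by $p$, forcing $f(\beta)\ne 1$. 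That valuation argument is the missing idea: it is a clean arithmetic certificate that a specific product of the required shape avoids the value $1$, replacing the unproven "sidestepping" in your iteration.
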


\begin{proof}
By applying a fractional linear transformation of the form $\frac{1}{x-a}$ if necessary (with $a\in\QQ$), we may assume that $B\subseteq\QQ$ and $\beta\in\QQ$.  We may also apply a fractional linear transformation of the form $ax$ with $a\in\Z\setminus\{0\}$ to ensure that $B\subseteq\Z$ and $\beta\in\Z$.
By adjoining to $B$ a sufficiently large finite subset of $\Z\setminus (B\cup\{\beta\})$, we may assume that $\#B\ge 2$.
Let $p$ be a prime which does not divide $\gamma-\beta$ for any $\gamma\in B$, and put $\delta:=\beta+p$.  Let $B':=B\cup\{\delta\}$, and write the elements of $B'$ as
\[
B'=\{b_{1},b_{2},\ldots,b_{m}\}
\]
with $b_{m}=\delta$.

We now construct the desired rational function $f(x)$.  Let
\[
c=\prod_{\substack{1\le i,j\le m \\ i\ne j}} (b_{i}-b_{j}),
\]
and note that $c\ne 0$.
Partial fraction decomposition ensures that there are unique $n_1,\dots,n_m\in\QQ$ such that
\[
\sum_{i=1}^m\frac{n_i}{x-b_i}=\frac{c}{\prod_{i=1}^m (x-b_i)},
\]
namely
\[
n_i=\frac{c}{\displaystyle{\prod_{\substack{1\le j\le m \\ j\ne i}} (b_j-b_i)}}.
\]
Our choice of $c$ ensures that the $n_{i}$ are nonzero integers.  Finally, we define
\[
f(x)=\prod_{i=1}^m (x-b_i)^{2n_i}.
\]

It remains to check that $f(x)$ has the required properties.  Since each $n_i$ is nonzero, the elements of $B'$ are critical points of $f$.
Every finite critical point of $f$ must be either a zero or a pole of
\[
\frac{f'(x)}{f(x)}=\sum_{i=1}^m\frac{2n_i}{x-b_i}.
\]
Our construction of $f$ ensures that the right side equals
\[
\frac{2c}{\prod_{i=1}^m (x-b_i)},
\]
so $f$ cannot have any critical points outside of $B'\cup\{\infty\}$.  Since $f(B')\subseteq\{0,\infty\}$ and $f(\infty)\in\{0,1,\infty\}$, it follows that $\br(f)\subseteq\{0,1,\infty\}$.  Moreover, since $f$ ramifies at each element of $B'$ and we know that $\#B'\ge 3$, Riemann--Hurwitz implies that $f$
must have at least three branch points, whence $\br(f)=\{0,1,\infty\}$.  Finally, we show that $f(\beta)\notin\{0,1,\infty\}$.
Since $\beta\not\in B'$, we know {\it a priori} that $f(\beta)\not\in\{0,\infty\}$.  Our choice of the prime $p$ ensures that $p\nmid(\beta-b_i)$ for $i<m$,
yet $p\mid(\beta-b_m)$. Thus the $p$-adic valuation of
\[
f(\beta)=\prod_{i=1}^m(\beta-b_i)^{2n_i}
\]
is nonzero, so that $f(\beta)\ne 1$.  
\end{proof}

With all the ingredients in place, we can now prove Theorem~\ref{mochizuki}.

\begin{proof}[Proof of Theorem~\ref{mochizuki}]
By adjoining to $S$ the set of $\gal(\bar{K}/K)$-conjugates of elements of $S$, we may assume that $S$ is preserved by
$\gal(\bar{K}/K)$.  Likewise we may assume that $T$ is preserved by $\gal(\bar{K}/K)$.
By adjoining to $T$ a finite $\gal(\QB/K)$-stable set of points in $C(\QB)\setminus S$ if necessary, we may assume that $T$ is nonempty.  
Let $\varphi_{1}\colon C\rightarrow\mathbb{P}^{1}$ be the map produced by Proposition~\ref{riemannrochlemma} for this choice of $S$ and $T$.
Then $\varphi_1$ is defined over $K$, the set $A:=\varphi_1(S)\cup\br(\varphi_1)$ does not contain $\alpha:=\infty$, and $\varphi_1(T)=\{\infty\}$.
Let $\varphi_{2}\colon\mathbb{P}^{1}\rightarrow\mathbb{P}^{1}$ be the map produced by
Proposition~\ref{reductiontoQ} for $A$ and $\alpha$.
Then $\varphi_2$ is defined over $\QQ$, both $\beta:=\varphi_2(\alpha)$ and $B:=\varphi_2(A)\cup\br(\varphi_2)$ are contained in $\mathbb{P}^1(\QQ)$,
and $\beta\notin B$.
Lastly, let $\varphi_{3}\colon \mathbb{P}^{1}\rightarrow\mathbb{P}^{1}$ be the map produced by Proposition~\ref{finishQ} for $B$ and $\beta$.
Then $\varphi_3$ is defined over $\QQ$, every point in $B$ ramifies under $\varphi_3$, and $\varphi_3(\beta)\notin\{0,1,\infty\}=\br(\varphi_3)$.
Pictorially, we have
\[
\xymatrix{C\ar[d]^{\varphi_{1}} & T\ar[d] & S\ar[d]\\
\mathbb{P}^{1}\ar[d]^{\varphi_{2}}&\alpha=\infty\ar[d]\ar @{} [r] |-{\not\in}&A=\varphi_{1}(S)\cup \br(\varphi_1)\ar[d]\\
\mathbb{P}^{1}\ar[d]^{\varphi_{3}}&\beta=\varphi_{2}(\infty)\ar[d]\ar @{} [r] |-{\not\in}&B=\varphi_{2}(A)\cup \br(\varphi_{2})\ar[d]\\
\mathbb{P}^{1}&\varphi_{3}(\beta)\ar @{} [r] |-{\not\in}&\{0,1,\infty\}
}
\]
Thus $\varphi:=\varphi_{3}\circ\varphi_{2}\circ\varphi_{1}$ is a finite morphism $C\to\Line$ defined over $K$, and
$\varphi(T)=\varphi_3(\beta)\notin\{0,1,\infty\}$.  Since $\varphi_2(\varphi_1(S))\subseteq B$ and every point of $B$ ramifies
under $\varphi_3$, it follows that every point of $S$ ramifies under $\varphi$.  Finally, $\br(\varphi)$ is the union of the three sets
$\br(\varphi_3)$, $\varphi_3(\br(\varphi_2))$, and $\varphi_3(\varphi_2(\br(\varphi_1)))$, and hence equals $\{0,1,\infty\}$.
\end{proof}

We conclude this section by showing that, if $C$ is a complex curve of genus at least $2$, then only finitely many subsets of $C(\CC)$ of
any prescribed cardinality can occur as $\varphi^{-1}(\br(\varphi))$ where $\varphi\colon C\to\Line$ is a Belyi map.  In fact we will show at
the same time that only finitely many curves $C$ of any prescribed genus admit a Belyi map $\varphi\colon C\to\Line$ for which
$\varphi^{-1}(\br(\varphi))$ has prescribed cardinality.

\begin{prop} \label{fin}
Fix integers $n,g\ge 1$.  There are only finitely many isomorphism classes of complex curves $C$ of genus $g$ for which there exists a Belyi map
$\varphi\colon C\to\Line$ such that $\#\varphi^{-1}(\br(\varphi))=n$.  Moreover, if $g\ge 2$ and $C$ is a curve of genus $g$, then there
are only
finitely many $n$-element subsets of $C(\CC)$ which occur as $\varphi^{-1}(\br(\varphi))$ for a Belyi map $\varphi\colon C\to\Line$.
\end{prop}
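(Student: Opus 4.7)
The plan is to pin down the degree of any such Belyi map via Riemann--Hurwitz, and then to invoke the classical finiteness of branched covers of $\Line$ with three prescribed branch points. First I would compute the degree $d$ of any Belyi map $\varphi\colon C\to\Line$ as in the statement. Writing $\br(\varphi)=\{p_1,p_2,p_3\}$ and $n_i:=\#\varphi^{-1}(p_i)$, the fiber identity $\sum_{P\in\varphi^{-1}(p_i)}e_P = d$ yields $\sum_{P\in\varphi^{-1}(p_i)}(e_P-1)=d-n_i$, and Riemann--Hurwitz then gives
\[
2g-2 \,=\, -2d+\sum_{i=1}^{3}(d-n_i) \,=\, d-n,
\]
so that $d=2g+n-2$ is determined by $g$ and $n$ alone. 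Moreover, the set $\varphi^{-1}(\br(\varphi))$ is unchanged upon replacing $\varphi$ by $\tau\circ\varphi$ for any $\tau\in\PGL_2$, so without loss of generality I may normalize $\br(\varphi)=\{0,1,\infty\}$.

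Next, by Riemann's existence theorem (equivalently, the Grothendieck correspondence), isomorphism classes of finite covers $\varphi\colon C\to\Line$ of the fixed degree $d$ with $\br(\varphi)\subseteq\{0,1,\infty\}$ are in bijection with conjugacy classes of transitive triples $(\sigma_0,\sigma_1,\sigma_\infty)\in S_d^3$ satisfying $\sigma_0\sigma_1\sigma_\infty=1$. This is a finite set for each $d$, so there are only finitely many isomorphism classes of pairs $(C,\varphi)$; since each such class determines $C$ up to isomorphism, the first assertion follows.

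For the second assertion, fix $C$ of genus $g\ge 2$ and recall that its automorphism group is then finite (by the classical Hurwitz bound $|\mathrm{Aut}(C)|\le 84(g-1)$). By the previous paragraph, only finitely many isomorphism classes of normalized pairs $(C',\varphi')$ satisfy $C'\cong C$; within any one such class, two representatives $\varphi_1,\varphi_2\colon C\to\Line$ must satisfy $\varphi_2=\varphi_1\circ\sigma^{-1}$ for some $\sigma\in\mathrm{Aut}(C)$, so each class contributes at most $|\mathrm{Aut}(C)|$ distinct morphisms $C\to\Line$. Hence there are only finitely many Belyi maps on $C$ with $\br=\{0,1,\infty\}$, and therefore only finitely many possible preimage sets. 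The main subtlety I expect is clean bookkeeping of the equivalences --- the preimage set is invariant under the $\PGL_2$-action on the target while isomorphism of pairs absorbs the $\mathrm{Aut}(C)$-action on the source --- but both degrees of freedom are finite-to-one, so the finiteness count survives.
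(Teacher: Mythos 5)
Your proof is correct and follows essentially the same route as the paper: Riemann--Hurwitz pins down $d=2g-2+n$, normalizing the branch locus to $\{0,1,\infty\}$ costs nothing, the classical finiteness of degree-$d$ covers branched over three points gives the first claim, and finiteness of $\mathrm{Aut}(C)$ for $g\ge 2$ gives the second. The only cosmetic difference is that you justify the finiteness of such covers via monodromy triples and Riemann's existence theorem, whereas the paper simply cites this as a known result.
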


\begin{proof}
Let $\varphi\colon C\to\Line$ be a degree-$d$ Belyi map on a genus-$g$ curve $C$, and let $B:=\varphi^{-1}(\br(\varphi))$.
For any $P\in B$, write $e(P)$ for the ramification index of $P$ under $\varphi$.  The Riemann--Hurwitz formula implies that
\[
2g-2 = -2d + \sum_{P\in B} (e(P)-1) = -2d + 3d-\#B,
\]
so that $d=2g-2+\#B$.  Thus, $d$ is determined by $g$ and $n:=\#B$.

For any Belyi map $\varphi\colon C\to\Line$, there is a fractional linear transformation
$\mu\in\CC(x)$ for which $\mu(\br(\varphi))=\{0,1,\infty\}$.  Thus
$\bar\varphi:=\mu\circ\varphi$ is a Belyi map with branch locus $\{0,1,\infty\}$, and
$\varphi^{-1}(\br(\varphi))=\bar\varphi^{-1}(\{0,1,\infty\})$.  So there is no loss in restricting to Belyi maps with branch locus $\{0,1,\infty\}$.
A classical result (see for instance \cite[Prop.~3.1]{Koeck}) implies that, for any fixed $d$, there are only
finitely many isomorphism classes of pairs $(C,\varphi)$ where $C$ is a complex curve and $\varphi\colon C\to\Line$ is a degree-$d$ Belyi map with
branch locus $\{0,1,\infty\}$.  Thus, for fixed $n$ and $g$, there are only finitely many isomorphism classes of corresponding curves $C$.
Since any curve of genus at least $2$ has only finitely many automorphisms, it follows that for fixed $n,g$ with $g\ge 2$ and a fixed genus-$g$
curve $C$ there are only finitely many $n$-element sets of the form $\varphi^{-1}(\{0,1,\infty\})$ where $\varphi\colon C\to\Line$
is a Belyi map with branch locus $\{0,1,\infty\}$.
\end{proof}


\section{Characteristic $p$}

In this section we prove Theorem~\ref{charp}.  The key tool is the following result:

\begin{prop}\label{charpprop}
Let $K$ be a perfect field of characteristic $p>0$, and let $B\subseteq\mathbb{P}^{1}(\bar{K})$ be a finite set such that $0\not\in B$. Then there exists $f\in K(x)$
such that
\begin{itemize}
\item $f$ is ramified at every point in $B$
\item $\br(f)=\{\infty\}$
\item $f(0)\ne\infty$.
\end{itemize}
\end{prop}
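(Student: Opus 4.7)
The plan is to construct $f$ explicitly, exploiting the characteristic-$p$ phenomenon that $p$-th powers have vanishing derivative. First I would reduce to the case where $B$ is stable under $\gal(\bar{K}/K)$ by replacing $B$ with the union of its Galois orbits; this preserves the hypothesis $0\notin B$ because $0\in K$ is fixed by Galois. Let $B_0 = B\setminus\{\infty\}$, which remains Galois-stable, and set
\[
h(x) = \sum_{b\in B_0}\frac{1}{x-b}\in K(x),
\]
where membership in $K(x)$ follows from Galois-invariance of the sum. Define
\[
f(x) = x + x^p + h(x)^p\in K(x).
\]

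The crucial calculation is that in characteristic $p$ we have $h(x)^p = \sum_{b\in B_0}(x-b)^{-p}$ by Frobenius additivity, and each summand $(x-b)^{-p}$ has derivative $-p(x-b)^{-p-1}=0$; similarly $(x^p)'=0$. Hence $f'(x) = 1$, which is a nonzero constant, so $f$ is separable and has no finite critical points other than its poles. Each $b\in B_0$ is a simple pole of $h$, hence a pole of $h^p$ of order $p$, so $f$ has a pole of order $p$ at $b$ and is ramified there with index $p>1$. The term $x^p$ contributes a pole of order $p$ at $\infty$, so $f$ is also ramified at $\infty$. Every ramification point of $f$ is thus mapped to $\infty$, so $\br(f)=\{\infty\}$, and each point of $B$ (whether in $B_0$ or equal to $\infty$) is a ramification point of $f$.

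Finally, $f(0) = 0 + 0 + h(0)^p = h(0)^p$, which is finite since $0\notin B_0$; thus $f(0)\ne\infty$, establishing all three required properties.

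The main step to get right is the design of $f$: one combines the separable monomial $x$ (whose derivative is the nonzero constant $1$) with purely $p$-th-power terms, which supply wild ramification at the desired points while contributing nothing to $f'$. This is in the spirit of Artin--Schreier theory, which underlies the characteristic-$p$ analog of Belyi's theorem. No results beyond those stated earlier in the paper are required.
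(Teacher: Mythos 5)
Your proof is correct, and it takes a genuinely different (and more economical) route than the paper's. The paper first forms the $\mathbb{F}_p$-span $V$ of the Galois conjugates of $B\setminus\{\infty\}$, uses the resulting additive polynomial $T(x)=\prod_{\alpha\in V}(x-\alpha)=\sum_i a_i x^{p^i}$ together with $S(x)=T(x)^p/x^p$ to build $g(x)=x^p+\tfrac{1}{T(x)+S(x)}$ with $g'=-a_0/(T+S)^2$ nowhere vanishing, and only at the end forces ramification at every point of $B$ by post-composing with the Artin--Schreier polynomial, $f=g^p+g$. You instead achieve ramification at each $b\in B_0$ directly, by giving $f$ a pole of order exactly $p$ there via the term $h(x)^p$ with $h(x)=\sum_{b\in B_0}(x-b)^{-1}$, and you control the branch locus by the same characteristic-$p$ mechanism (the $p$-th powers $x^p$ and $h^p$ contribute nothing to $f'$, so $f'\equiv 1$ and the only ramification points are the poles, all lying over $\infty$). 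All the small points check out: Galois-stabilizing $B$ keeps $0$ out of it, $h\in K(x)$ by Galois invariance (using that $K$ is perfect), there is no cancellation of the pole of $h$ at any $b$ since the remaining summands are regular there, $\infty$ gets a pole of order $p$ from $x+x^p$ because $h^p$ vanishes at $\infty$, and $\infty\in\br(f)$ so the branch locus is exactly $\{\infty\}$ rather than merely contained in it. Your construction yields degree $p(\lvert B_0\rvert+1)$, far smaller than the paper's (whose degree is governed by $\lvert V\rvert$, potentially exponential in $\lvert B\rvert$), and it avoids both the additive-polynomial machinery and the final composition; the paper's version, on the other hand, packages the wild ramification into a single Artin--Schreier step at the end, which makes the ``ramified everywhere over $\infty$'' conclusion immediate from functoriality of branch loci under composition.
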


\begin{proof}
Let $\bar{B}\subseteq\mathbb{P}^{1}(\bar{K})$ be the set of $\gal(\bar{K}/K)$-conjugates of elements in $B\setminus\{\infty\}$, and
let $V\subseteq \bar{K}$ be the $\mathbb{F}_{p}$-span of $\bar{B}$.  This is a finite set which is preserved by $\gal(\bar{K}/K)$, and
its minimal polynomial
\[
T(x):=\prod_{\alpha\in V}(x-\alpha)
\]
has the form
\[
T(x) = \sum_{i=0}^{n}a_{i}x^{p^{i}}
\]
where $a_i\in K$ and $a_{0}\ne 0$. Let
\[
S(x) := \frac{T(x)^{p}}{x^{p}} = \sum_{i=0}^{n} a_{i}^{p}x^{p(p^{i}-1)}
\]
and
\[
g(x) := x^{p}+\frac{1}{T(x)+S(x)}.
\]
The rational function $g(x)$ is well-defined since $S(0)=a_{0}^p\ne 0$ while $T(0)=0$.
Plainly $g(\infty)=\infty$.  Since $0\notin B$ by hypothesis, for $\beta$ in $B\setminus\{\infty\}$ we have
\[
T(\beta)+S(\beta) = T(\beta)+\frac{T(\beta)^{p}}{\beta^{p}}=0,
\]
so that $g(\beta)=\infty$.  Thus $g(B)\subseteq\{\infty\}$, and 
furthermore 
\[
g(0)=\frac{1}{T(0)+S(0)}=\frac{1}{a_{0}^p}\ne\infty.
\]
Since $T'(x) = a_{0}$ and $S'(x)=0$, we have
\[
g'(x) = \frac{-a_{0}}{(T(x)+S(x))^{2}}
\]
which has no finite roots since $a_{0}\ne 0$.  Therefore $g(x)$ has no finite critical points which are not poles, and since $g(\infty)=\infty$, we see that $\br(g)\subseteq\{\infty\}$.
The polynomial $x^p+x\in K[x]$ has $\infty$ as its unique critical point and its unique branch point.  Thus $f=g^p+g$ satisfies the requirements of the proposition.
\end{proof}

\begin{proof}[Proof of Theorem~\ref{charp}]
By adjoining to $S$ the set of $\gal(\bar{K}/K)$-conjugates of elements of $S$, we may assume that $S$ is preserved by
$\gal(\bar{K}/K)$.  Likewise we may assume that $T$ is preserved by $\gal(\bar{K}/K)$.
By adjoining to $T$ a finite $\gal(\bar{K}/K)$-stable set of points in $C(\bar{K})\setminus S$ if necessary, we may assume that $T$ is nonempty.  
Let $\varphi_{1}\colon C\rightarrow\mathbb{P}^{1}$ be the map constructed in Proposition~\ref{riemannrochlemma} for this choice of $S$ and $T$.
Then $\varphi_1$ is defined over $K$, the set $A:=\varphi_1(S)\cup\br(\varphi_1)$ does not contain $\alpha:=\infty$, and
$\varphi(T)=\{\infty\}$.  Let $\varphi_2\colon\Line\to\Line$ be the map $x\mapsto 1/x$, and put 
$B:=\varphi_2(A)$, so that $B$ does not contain $\varphi_2(\alpha)=0$.  Let $\varphi_3\colon\Line\to\Line$ be the map constructed in Proposition~\ref{charpprop} for this choice of $B$.  Then $\varphi_3$ is defined over $K$, ramifies at every point in $B$, and
satisfies $\varphi_3(0)\ne\infty$ and $\br(\varphi_3)=\{\infty\}$.  This yields the diagram
\[
\xymatrix{C\ar[d]^{\varphi_{1}} & T\ar[d] & S\ar[d]\\
\mathbb{P}^{1}\ar[d]^{\frac{1}{x}}&\alpha=\infty\ar[d]\ar @{} [r] |-{\not\in}&A=\varphi_{1}(S)\cup \br(\varphi_1)\ar[d]\\
\mathbb{P}^{1}\ar[d]^{\varphi_{3}}&\beta=0\ar[d]\ar @{} [r] |-{\not\in}&B=\{\gamma:1/\gamma\in A\}\ar[d]\\
\mathbb{P}^{1}&\varphi_{3}(\beta)\ar @{} [r] |-{\not\in}&\{\infty\}
}
\]
Then $\varphi:=\varphi_{3}\circ\varphi_2\circ\varphi_{1}$ is a finite morphism $C\to\Line$ defined over $K$,
and $\varphi(T)=\varphi_3(\beta)\ne\infty$.  Since $\varphi_3$ ramifies at every point in $B$, and $B$ contains $\varphi_2(\varphi_1(S))$,
we see that $\varphi$ ramifies at every point in $S$.  Finally, since $\varphi_2$ is unramified, $\br(\varphi)$ is the union of $\br(\varphi_3)$
and $\varphi_3(\varphi_2(\br(\varphi_1)))$, and hence equals $\{\infty\}$.
\end{proof}


\section{Collections of Belyi maps}

Finally, we consider collections of Belyi maps, and prove Theorem~\ref{manymaps}.  

\begin{proof}[Proof of Theorem~\ref{manymaps}]
Let $T_0=\emptyset$.  For each $i=1,2,\dots,n+1$, if $T_{i-1}$ is a finite subset of $C(\QB)\setminus S$ then we define
a morphism $\varphi\colon C\to\Line$ and a finite subset $T_i$ of $C(\QB)\setminus S$ as follows.
Let
$\varphi_i\colon C\rightarrow\mathbb{P}^1$ be the morphism produced by Theorem~\ref{mochizuki} for the sets $S$ and $T:=T_{i-1}$.
Then
\[
\varphi_i(S)\subseteq\{0,1,\infty\}=\br(\varphi_i)
\]
and
\[
\varphi_i(T_{i-1})\cap\{0,1,\infty\}=\emptyset.
\]
Then
\[
T_i:=T_{i-1}\cup \varphi_{i}^{-1}(\{0,1,\infty\})\setminus S
\]
is a finite subset of $C(\QB)\setminus S$.
This procedure yields finite morphisms $\varphi_1,\ldots,\varphi_{n+1}\colon C\rightarrow \mathbb{P}^1$.
For $i=1,2,\dots,n+1$, note that
\[
\varphi_{i}^{-1}(\{0,1,\infty\}) = S\cup\left(T_i\setminus T_{i-1}\right).
\]
Since $T_1\subseteq T_2\subseteq\ldots\subseteq T_{n+1}$, the sets
\[
T_1\setminus T_0, T_2\setminus T_1,\ldots, T_{n+1}\setminus T_n
\]
are pairwise disjoint.  Thus any $n$-element subset $T$ of $C(\QB)$ must be disjoint from at least one set $T_i\setminus T_{i-1}$,
so if $T\cap S=\emptyset$ then $\varphi_i(T)\cap\{0,1,\infty\}=\emptyset$.
\end{proof}

The following analogue of Theorem~\ref{manymaps} can be shown by a similar argument.

\begin{thm}\label{mymochpgen}
Let $n$ be a positive integer, and let $C$ be a curve defined over a perfect field $K$ of characteristic $p>0$. 
For any finite  $S\subset C(K)$, there exist finite morphisms
\[
\varphi_{1},\ldots,\varphi_{n+1}\colon C\rightarrow \mathbb{P}^{1}
\]
defined over $K$ such that
\begin{itemize}
\item $\varphi_{i}(S)\subseteq\{\infty\}=\br(\varphi_i)$ for $1\le i\le n+1$
\item For any $n$-element subset $T\subset C(\bar{K})\setminus S$ there exists $i\in\{1,\ldots,n+1\}$ such that $\varphi_{i}(T)\cap\{\infty\}=\emptyset$.
\end{itemize}
\end{thm}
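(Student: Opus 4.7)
The plan is to repeat the inductive construction used in the proof of Theorem~\ref{manymaps}, but appealing to Theorem~\ref{charp} in place of Theorem~\ref{mochizuki}. Starting from $T_0 := \emptyset$, I would build a chain of finite $\gal(\bar{K}/K)$-stable subsets $T_0 \subseteq T_1 \subseteq \cdots \subseteq T_{n+1}$ of $C(\bar{K}) \setminus S$ together with the morphisms $\varphi_i$.

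At the $i$-th step, I would invoke Theorem~\ref{charp} with the given $S$ and with $T := T_{i-1}$. Before doing so, one must verify the hypothesis that $S$ is disjoint from the $\gal(\bar{K}/K)$-conjugates of $T_{i-1}$: since $S \subseteq C(K)$ is automatically Galois-stable and $T_{i-1}$ is Galois-stable (being built from preimages of $\infty$ under morphisms defined over $K$), this reduces to the condition $S \cap T_{i-1} = \emptyset$, which holds by construction. Theorem~\ref{charp} then produces $\varphi_i\colon C \to \mathbb{P}^1$ defined over $K$ with $\br(\varphi_i) = \{\infty\}$, ramified at every point of $S$ (so in particular $\varphi_i(S) \subseteq \{\infty\}$), and satisfying $\infty \notin \varphi_i(T_{i-1})$. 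I would then define
\[
T_i := T_{i-1} \cup \bigl(\varphi_i^{-1}(\{\infty\}) \setminus S\bigr),
\]
which is again a finite Galois-stable subset of $C(\bar{K}) \setminus S$.

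The crucial combinatorial observation, exactly parallel to the one in the proof of Theorem~\ref{manymaps}, is that
\[
T_i \setminus T_{i-1} = \varphi_i^{-1}(\{\infty\}) \setminus S;
\]
the nontrivial containment uses the disjointness $\varphi_i^{-1}(\{\infty\}) \cap T_{i-1} = \emptyset$ coming from the third bullet of Theorem~\ref{charp}. Hence the sets $T_1 \setminus T_0, T_2 \setminus T_1, \ldots, T_{n+1} \setminus T_n$ are $n+1$ pairwise disjoint subsets of $C(\bar{K}) \setminus S$. Given any $n$-element subset $T$ of $C(\bar{K}) \setminus S$, pigeonhole forces $T$ to be disjoint from at least one $T_i \setminus T_{i-1}$, and for that index we obtain $T \cap \varphi_i^{-1}(\{\infty\}) = \emptyset$, i.e.\ $\varphi_i(T) \cap \{\infty\} = \emptyset$.

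I do not anticipate any substantive obstacle here: all of the geometric content has already been packaged into Theorem~\ref{charp}, and what remains is the formal bookkeeping to verify that Galois-stability of $T_{i-1}$ and its disjointness from $S$ are maintained throughout the induction so that Theorem~\ref{charp} applies at each stage. The only reason one gets $n+1$ maps rather than fewer is the pigeonhole count on $n$-element subsets against $n+1$ disjoint sets, which is tight in the same way as in characteristic zero.
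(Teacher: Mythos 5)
Your proposal is correct and is exactly what the paper intends: the paper gives no separate proof of Theorem~\ref{mymochpgen}, stating only that it follows ``by a similar argument'' to Theorem~\ref{manymaps}, and your argument is precisely that adaptation, with Theorem~\ref{charp} replacing Theorem~\ref{mochizuki} and the same disjoint-differences pigeonhole. The extra care you take in checking Galois-stability of the sets $T_{i-1}$ so that Theorem~\ref{charp} applies at each stage is the right bookkeeping and matches the hypotheses as stated.
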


We conclude with a remark about potential improvements of the number $n+1$ of maps in Theorem~\ref{manymaps}.

\begin{remark}
Let $C$ be a curve over $\QB$, and let $S$ be a finite subset of $C(\QB)$.
If $S$ has the form $\varphi^{-1}(\br(\varphi))$ for some Belyi map $\varphi\colon C\to\Line$,
then we may replace the $n+1$ maps $\varphi_1,\ldots,\varphi_{n+1}$ in the conclusion of Theorem~\ref{manymaps} with the single map $\varphi$.
If $S$ does not have this form then the $n+1$ Belyi maps in the conclusion of Theorem~\ref{manymaps}
cannot be replaced by a smaller set of maps.
 For, if $\varphi_1,\ldots,\varphi_n\colon C\rightarrow\mathbb{P}^1$ are Belyi maps with $\br(\varphi_i)=\{0,1,\infty\}$ and
$S\subsetneq\varphi_i^{-1}(\{0,1,\infty\})$, then pick some
$t_i\in\varphi_{i}^{-1}(\{0,1,\infty\})\setminus S$, and note that
\[
T_0:=\{t_1,\ldots,t_n\}
\]
satisfies $\abs{T_0}\le n$ and
\[
\varphi_i(T_0)\cap\{0,1,\infty\}\neq\emptyset
\]
for every $i$.
\end{remark}


\end{document}